\newcommand{\ls}{\left[}
\newcommand{\rs}{\right]}
\newcommand{\lf}{\left\{}
\newcommand{\la}{\left\langle}
\newcommand{\ra}{\right\rangle}
\newcommand{\rf}{\right\}}
\newcommand{\R}{\mathbb{R}}
\newcommand{\cW}{\mathcal{W}}
\newcommand{\cX}{\mathcal{X}}
\newcommand{\cY}{\mathcal{Y}}
\newcommand{\cF}{\mathcal{F}}
\newcommand{\eps}{\epsilon}
\newcommand{\nom}{\nabla\psi}
\renewcommand{\P}{\mathbb{P}}
\renewcommand{\Pr}{\mathbb{P}}
\newcommand{\E}{\mathbb{E}}
\newcommand{\lv}{\left\vert}
\newcommand{\rv}{\right\vert}
\newcommand\argmin{\mylim{arg\,min\,}}
\newcommand\argmax{\mylim{arg\,max\,}}
\title{ MМulti-Class Classification: \\Mirror Descent Approach}
\author{
Daria~Reshetova
\\
Moscow Institute of Physics and Technology\\
Institute for Information Transmission Problems\\
\texttt{reshetova@phystech.edu} \\
}
\newtheorem{lemma}{Lemma}
\newtheorem{cor}{Corollary}
\begin{document}

\maketitle

\begin{abstract}
We consider the problem of multi-class classification and a stochastic optimization approach to it. We derive risk bounds for stochastic mirror descent algorithm and provide examples of set geometries that make the use of the algorithm efficient in terms of error in the number of classes.
\end{abstract}

\section{Introduction}

Classification is one of the core machine learning tasks. Multi-class classification arises in various problems including document classification \cite{rennie2001improving}, image \cite{foody2004relative,lee1997unsupervised}, gesture \cite{mcneill1992hand} and video recognition \cite{karpathy2014large} and many others. Datasets for the problems are growing in both number of samples and number of classes $k$. As the expected error of classification algorithms also increases, the growth rate in terms of $k$ becomes crucial.

In this paper we consider the classical one-vs-all margin classification approach \cite{aly2005survey}, which was empirically shown to be as good as ECOC and all-vs-all approach, at least from the practical point of view \cite{rifkin2004defense}. There are several generalization ability guarantees known for the class of learners. Distribution-independent ones rely on function class complexity measures such as  Natarajan 
 \cite{natarajan1989learning}, graph 
 \cite{natarajan1989learning,dudley2010universal} and Vapnic-Chervoninkis 
 \cite{guermeur2007vc} dimension. If we consider kernel separators $f_y(x, w)=K(x, w_y)$ with PSD K, the bounds lead to $\tilde{O}(k/\sqrt{n})$ excess risk \cite{daniely2013multiclass,guermeur2007vc}, Covering number based bound presented in \cite{zhang2002covering} gives $\tilde{O}(\sqrt{k/n})$ rate. While the bounds provided seem tight, they result in large constants and dimension dependence for combinatorial complexity measures. As for distribution-dependent bounds, little is known beyond the general Rademacher complexity based bound, which is in the worst case of order $O(k/\sqrt{n})$  for typical function classes (e. g. finite VC-dimensional). 

As the underlying distribution of the pairs object-class is unknown, the problem can be solved by substituting the actual risk by the empirical one or by means of stochastic optimization. While the general problem statement of minimizing the risk allows the implementation of different optimization algorithms, first order methods are preferable to high-order ones for large-scale problems in terms of their generalization ability and computational efficiency \cite{bousquet2008tradeoffs}. We consider an adaptation of stochastic Mirror Descent algorithm \cite{nemirovsky1982problem,beck2003mirror} to solve the problem. Mirror Descent is a first-order algorithm for convex function minimization, which restricts the method to convex $\Omega$ and convex in $w$ loss-function $\ell(x,y,w),$ allowing dimension-independent excess risk bounds. 


The rest of the paper is organized as follows. In section 2 we describe the general assumptions made concerning the input space. In the next section we describe the algorithm and provide upper bounds for the expected error of the classifier. In section 4 we provide the bounds for large deviations probabilities and the examples of parameter set  geometries that allow faster rates.

\section{Preliminaries}
We first describe the framework of multi-class classification used in the paper.
Let $\cX\subseteq \R^d$ be a set of instances, $\cY=\{1\ldots k\}$ be a set of classes. The general assumption is that $\cX\times\cY$  support a probability space $(\cX\times \cY, A, \P)$ and the sample $S=\{x_i, y_i\}_{i=1}^n$ is i.i.d. drawn from the distribution. We denote $\mathcal{F}=\{f(\cdot, \cdot, w):\cX\times \cY\to \R\vert w\in\cW\}$ the class of decision functions -- a parametric class of measurable functions and $\ell: \cX\times \cY\times \cW \to \R_{+}$ the loss function.
We consider one-vs-all approach to the problem by setting the predictor to be 
$\hat{f}(x, w^*)=\underset{y\in \cY}{\max}f(x,y, w^*).$ The loss function is chosen to be a Lipschitz upper bound on the indicator function 
$[\hat{f}(x_i,w)=y_i]$ and to maximize the margin:
\[m(x, y, w)=f(x,y,w)-\underset{\hat{y}\in\cY,\, \hat{y}\neq y}
{\max}f(x,\hat{y},w)\]
We use $\ell(x,y,w)=\max\{0, 1-m(x,y,w)/\rho\}$ as the loss finction. The problem is then to minimize the expected risk
 $F(w)=\E_{(x,y)\sim \P}\ell(x, y, w):$
\begin{equation}\label{prob1}
w^*=\underset{w\in\cW}\argmin \E_{(x, y)}\ell(m(x,y,w))
\end{equation}

To make the application of mirror descent possible, the underlying function has 
to be convex, the fact that the distribution $\Pr$ is unknown leads to $f(x, y, w)$ needing to be linear in $w$, so 
$\cF=\{\la x,w_y\ra\,\vert\, w\in\cW\}$ further in the paper. The case can also be generalized to PSD-kernel classification via linear classifiers in RKHS \cite{mohri2012foundations}.

\section{Oracle inequalities}
Mirror descent algorithm is similar to stochastic gradient descent, except that for $\cW\subset E$ with $E$ being a Euclidean space it ensures gradient steps to be made in $E^*$ by mapping there with $\nabla \psi,$ where $\psi: E\to \R$ is a
strongly convex function with gradient field continuous on $\cW$:
\[\psi(w^1)-\psi(w^2)-\la \nabla\psi(w^2), w^1-w^2\ra
\geq \frac{1}{2} \|w^1-w^2\|^2.\]
As the number of classes is a factor of the dimension of $\cW,$ choosing the right proximity to measure the set diameter can effectively lower the error rate.

Mirror Descent steps are gradient steps with Bregman divergence of $\psi$ in the role of the distance:
\[w^1=\underset{w\in\cW}{\argmin}\psi(w)\]
$$w^{m+1}=\underset{w\in W}{\argmin} \{\Delta (w, w^m)+\alpha_m\langle F'(w^m), w-w^m\rangle\},$$
$\Delta(w^1, w^2)=\psi(w^1)-\psi(w^2)-\la \nabla\psi(w^2), w^1-w^2\ra \geq \frac{1}{2} \|w^1-w^2\|^2.$ In case of expectation minimization gradients are taken at random points
$g^k\in \partial \ell(x_k, y_k, w^k)$, which ensures  $\E g^k\in \partial F(w^k)$
as long as $(x_k, y_k)$ and $w^k$ are independent. 
\begin{lemma} \cite{nemirovski2004prox,beck2003mirror}\label{step_ineq}
For all $w\in \cW$ 
$$\Delta(w, w^{m+1}) \leq \alpha_m\la g^m, w-w^{m+1}\ra+\Delta(w, w^m)-\Delta(w^{m+1}, w^m) $$
\end{lemma}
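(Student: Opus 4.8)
The plan is to obtain the inequality from the first-order optimality condition of the proximal step that defines $w^{m+1}$, combined with an elementary three-point identity for the Bregman divergence $\Delta$. First I would introduce the objective of the minimization, $\phi(w)=\Delta(w,w^m)+\alpha_m\langle g^m,w-w^m\rangle$. Since $\psi$ is strongly convex with gradient field continuous on $\cW$, $\phi$ is convex and differentiable on $\cW$ with $\nabla\phi(w)=\nabla\psi(w)-\nabla\psi(w^m)+\alpha_m g^m$. As $w^{m+1}$ minimizes $\phi$ over the convex set $\cW$, the variational inequality $\langle\nabla\phi(w^{m+1}),\,w-w^{m+1}\rangle\ge 0$ holds for every $w\in\cW$; that is,
$$\langle\nabla\psi(w^{m+1})-\nabla\psi(w^m)+\alpha_m g^m,\,w-w^{m+1}\rangle\ge 0.$$

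Next I would record the three-point identity: expanding $\Delta(a,b)=\psi(a)-\psi(b)-\langle\nabla\psi(b),a-b\rangle$ for the three pairs $(w,w^m)$, $(w,w^{m+1})$, $(w^{m+1},w^m)$ and collecting terms, all the $\psi(\cdot)$ values cancel and the inner products regroup to give
$$\Delta(w,w^m)-\Delta(w,w^{m+1})-\Delta(w^{m+1},w^m)=\langle\nabla\psi(w^{m+1})-\nabla\psi(w^m),\,w-w^{m+1}\rangle.$$
This step is pure algebra. Finally I would combine the two displays: the variational inequality rewrites as $\langle\nabla\psi(w^{m+1})-\nabla\psi(w^m),w-w^{m+1}\rangle\ge -\alpha_m\langle g^m,w-w^{m+1}\rangle$, so substituting into the identity yields
$$\Delta(w,w^m)-\Delta(w,w^{m+1})-\Delta(w^{m+1},w^m)\ge -\alpha_m\langle g^m,w-w^{m+1}\rangle,$$
and rearranging gives exactly $\Delta(w,w^{m+1})\le\alpha_m\langle g^m,w-w^{m+1}\rangle+\Delta(w,w^m)-\Delta(w^{m+1},w^m)$.

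The only delicate point — more a matter of care than of difficulty — is justifying the variational inequality when the minimizer $w^{m+1}$ lies on the boundary of $\cW$: one needs $\phi$ to be differentiable up to the boundary, which is exactly what the assumption that $\nabla\psi$ is continuous on $\cW$ provides, and one uses convexity of $\cW$ so that every $w-w^{m+1}$ is a feasible direction. Everything else reduces to bookkeeping with the definition of $\Delta$, so I expect no substantive obstacle.
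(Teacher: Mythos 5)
Your proposal is correct and follows essentially the same route as the paper: both use the first-order optimality (variational inequality) for the proximal objective $h(w)=\Delta(w,w^m)+\alpha_m\langle g^m,w-w^m\rangle$ at $w^{m+1}$ and then regroup terms via the Bregman three-point identity, which the paper simply carries out inline rather than stating as a named lemma. Your extra remark about differentiability of $\nabla\psi$ up to the boundary of the convex set $\cW$ is a sound clarification of a point the paper leaves implicit.
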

\begin{proof}
Set $h(w)=\Delta (w, w^m)+\alpha_m\langle g^m, w-w^m\rangle,$
 then $w^{m+1}=\underset{w\in\cW}\argmin\, h(w).$ \\
Optimality of
$w^{m+1}$ leads to $\la h'(w^{m+1}), w-w^{m+1}\ra\geq 0$

As long as 
$h'(w^{m+1})=\nabla\psi(w^{m+1})-\nabla\psi(w^m)+\alpha_m g^m,$
we can rearrange the terms and get
\begin{eqnarray*}
0 & \leq &\la\alpha_m g^m+\nom(w^{m+1})-\nom(w^m), w-w^{m+1}\ra\\
&=&\la\alpha_m g^m, w-w^{m+1}\ra-\la\nom(w^m), w-w^m\ra\\
& + & \la \nom(w^{m+1}), w-w^{m+1}\ra +\la\nom(w^m),w^{m+1}-w^m\ra\\
& = & \la\alpha_m g^m, w-w^{m+1}\ra - \Delta(w, w^{m+1})+\Delta(w, w^m)-\Delta(w^{m+1}, w^m)
\end{eqnarray*}
\end{proof}

Lemma \ref{step_ineq} and the fact that $\E g_k\in \partial F(w^k)$ result in an oracle inequality for stochastic Mirror Descent.
\begin{cor}
For $U^2=\underset{u,w\in\cW}{\argmax} \left(\psi(u)-\psi(w)\right),$ $G^2=\underset{w\in\cW}{\argmax}\E\|g(x,y,w)\|^2,$ with $g(x,y,w)\in\partial \ell(x,y,w)$ and for any $w\in \cW$ and 
$w^{(n)}=\frac{\sum_{m=1}^n\alpha_m w^m}{\sum_{m=0}^n\alpha_m}$ 
\begin{equation}\label{eq:main_bound}
\E\left(\ell(x,y,w^{(n)})-\ell(x,y,w)\right)\leq \frac{U^2+G^2\sum_{m=1}^n\alpha_m^2/2}{\sum_{m=1}^n\alpha_m}
\end{equation}
\end{cor}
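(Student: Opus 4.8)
The plan is to start from Lemma~\ref{step_ineq}, turn it into a per-step bound on $\alpha_m\la g^m,\, w^m-w\ra$, telescope over $m=1,\dots,n$, take expectations using unbiasedness of the stochastic gradients, and finally convert the resulting bound on the iterates' risk values into the stated bound via convexity of $\ell$.

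First I would rewrite the conclusion of Lemma~\ref{step_ineq} by splitting $w-w^{m+1}=(w-w^m)+(w^m-w^{m+1})$, which gives
\[
\alpha_m\la g^m,\, w^m-w\ra \;\le\; \Delta(w,w^m)-\Delta(w,w^{m+1})-\Delta(w^{m+1},w^m)+\alpha_m\la g^m,\, w^m-w^{m+1}\ra .
\]
To eliminate the last two terms I would combine the strong-convexity lower bound $\Delta(w^{m+1},w^m)\ge\tfrac12\|w^{m+1}-w^m\|^2$ with Cauchy--Schwarz and $at-\tfrac12 t^2\le\tfrac12 a^2$, obtaining $\alpha_m\la g^m,\, w^m-w^{m+1}\ra-\Delta(w^{m+1},w^m)\le \tfrac{\alpha_m^2}{2}\|g^m\|_*^2$. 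Summing over $m$, the divergence terms telescope; since $w^1$ minimizes $\psi$ over $\cW$, first-order optimality gives $\Delta(w,w^1)\le\psi(w)-\psi(w^1)\le U^2$, and dropping the nonnegative $\Delta(w,w^{n+1})$ yields
\[
\sum_{m=1}^n\alpha_m\la g^m,\, w^m-w\ra \;\le\; U^2+\frac12\sum_{m=1}^n\alpha_m^2\|g^m\|_*^2 .
\]

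Next I would take expectations. Conditioning on the $\sigma$-field $\mathcal F_{m-1}$ generated by $(x_1,y_1),\dots,(x_{m-1},y_{m-1})$, the iterate $w^m$ is determined and $(x_m,y_m)$ is independent of it, so $\E[g^m\mid\mathcal F_{m-1}]\in\partial F(w^m)$, and convexity of $F$ gives $\E\la g^m,\, w^m-w\ra\ge \E F(w^m)-F(w)$. On the right-hand side $\E\|g^m\|_*^2\le G^2$ by the definition of $G^2$. Hence $\sum_{m=1}^n\alpha_m\big(\E F(w^m)-F(w)\big)\le U^2+\tfrac{G^2}{2}\sum_{m=1}^n\alpha_m^2$. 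Finally, since $\ell(x,y,\cdot)$ is convex and $w^{(n)}$ is the average of $w^1,\dots,w^n$ with weights proportional to $\alpha_m$, Jensen's inequality gives $\E\ell(x,y,w^{(n)})\le\frac{1}{\sum_m\alpha_m}\sum_m\alpha_m\,\E\ell(x,y,w^m)=\frac{1}{\sum_m\alpha_m}\sum_m\alpha_m\,\E F(w^m)$; subtracting $F(w)=\E\ell(x,y,w)$ and dividing by $\sum_{m=1}^n\alpha_m$ yields \eqref{eq:main_bound}.

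The algebraic rearrangement, the Fenchel--Young step, and the telescoping are routine. The step needing the most care is the probabilistic one: making precise that $w^m$ is a measurable function of the first $m-1$ samples only, so that the tower property delivers $\E[g^m\mid\mathcal F_{m-1}]\in\partial F(w^m)$ and the interchange of summation and expectation is legitimate. A secondary point is the norm bookkeeping -- since the strong convexity of $\psi$ is measured in $\|\cdot\|$, the gradient terms naturally enter in the dual norm $\|\cdot\|_*$, which is the norm that should appear in the definition of $G^2$ (coinciding with $\|\cdot\|$ in the Euclidean/kernel case considered here).
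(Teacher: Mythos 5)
Your proof is correct and follows essentially the same route as the paper's: the same splitting $w-w^{m+1}=(w-w^m)+(w^m-w^{m+1})$, strong convexity plus Cauchy--Schwarz and Young's inequality to absorb the cross term, telescoping with $\Delta(w,w^1)\le U^2$, and then expectation plus convexity of $\ell$ for the averaged iterate. Your added care about the filtration argument for $\E[g^m\mid\mathcal F_{m-1}]\in\partial F(w^m)$ and about the dual norm in the definition of $G^2$ only makes explicit what the paper leaves implicit.
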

\begin{proof}
According to \ref{step_ineq} and by the strong convexity of $\psi(w):$
\begin{eqnarray*}
\Delta(w, w^{m+1}) & \leq &\alpha_m\la g^m, w-w^{m+1}\ra+\Delta(w, w^m)-\Delta(w^{m+1}, w^m)\\
&\leq &\la\alpha_m  g^m, w- w^{m}\ra + \la\alpha_m g^m, w^m- w^{m+1}\ra+\Delta(w, w^m)-\\
&&-\frac{1}{2}\|w^{m+1}- w^m\|^2 \\
& \leq & \la\alpha_m g^m, w-w^{m}\ra + \alpha_m\|g^m\| \|w^m- w^{m+1}\|+\Delta(w, w^m)-\\
&&-\frac{1}{2}\|w^{m+1}- w^m\|^2
\end{eqnarray*}
Summing over $m=1,\ldots,n$ gives:
\begin{equation}\label{eq:mid}
0\leq\sum_{m=0}^n\la\alpha_m g^m, w-w^{m}\ra+
\frac{1}{2}\sum_{m=1}^n\alpha_m^2\|g_m\|^2+\Delta(w, w^1)
\end{equation}
As $w^1=\underset{w\in\cW}{\argmin}\, \psi(w):$
\[\Delta(w, w^1)=\psi(w)-\psi(w^1)-\la \psi'(w^1), w^1
-w\ra\leq \psi(w)-\psi(w^1)\leq U^2,\] 
By convexity of $\ell$ and the independence of $(x_m,y_m)$ and $w^m:$
\[\sum_{m=0}^n\alpha_m (\ell(x_m,y_m,w^m)-\ell(x_m,y_m,w))\leq
\frac{1}{2}\sum_{m=1}^n\alpha_m^2\|g_m\|^2+U^2\]
$$\sum_{m=1}^n\alpha_m\left(F(w^{(n)})-F(w)\right)\leq\sum_{m=1}^n\alpha_m \big(\E\left(\ell(x,y,w^m)-\ell(x,y,w)\right)\big)\leq
\frac{1}{2}\sum_{m=1}^n{\alpha_m^2} G^2+{U^2}$$
\end{proof}

If steps are constant $\alpha_m=\alpha=\frac{\sqrt{2}U}{G\sqrt{n}}$ we get
$$F(w^{(n)})-F(w^*)\leq \frac{\sqrt{2}UG}{\sqrt{n}}$$

The dependence in the number of classes in the excess risk bound is hidden in
constants $U$ and $G$ and depends on the choice of distance-generating 
function $\psi(w)$ and the sets $\cW, \cX.$

\section{Probability inequalities}
An additional assumption on the exponential moments can be used to get probability estimate of the large deviations.
\begin{cor}
If there exists $\sigma>0:$s.t. for all $w\in\cW:$
$\E_{(x,y)}e^{d(x_i,y_i,w)/\sigma^2}\leq e,$ где $d(x_i,y_i,w) = \|g(x_i,y_i,w)-\E_{(x,y)}g(x,y,w)\|,$ then $\forall\theta>0,$ $g = \underset{w\in\cW}{\max}\|\E\ell'(x,y,w)\|_*:$
\begin{gather*}
\P\lf \E_{(x,y)}\ls\ell(x,y,w^{(n)})-\ell(x,y,w^*)\rs >\sum_{m=1}^n{\alpha_m\gamma_m} g^2+\frac{U^2}{\sum_{m=1}^n\alpha_m} +\right.\\
\left.\theta\left(\sqrt{U\sigma^2\sum_{m=1}^n\gamma_m^2}+
\sum_{m=1}^n \alpha_m\gamma_m \sigma^2\right)\rf \leq 
e^{1-\theta}+e^{-\theta^2/4}
\end{gather*}
\end{cor}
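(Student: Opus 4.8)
The plan is to revisit the proof of \eqref{eq:main_bound} and keep the stochastic fluctuations rather than averaging them out. Introduce the filtration $\mathcal F_m=\sigma(x_1,y_1,\dots,x_m,y_m)$, so that $w^m$ is $\mathcal F_{m-1}$-measurable and, by independence of $(x_m,y_m)$ and $w^m$, the conditional expectation $\E[g^m\mid\mathcal F_{m-1}]$ is a subgradient of $F$ at $w^m$; write it $\nabla F(w^m)$ and set $\xi^m=g^m-\nabla F(w^m)$, a martingale-difference sequence with $\|\xi^m\|=d(x_m,y_m,w^m)=:d_m$ and $\E[e^{d_m^2/\sigma^2}\mid\mathcal F_{m-1}]\le e$. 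Taking \eqref{eq:mid} at $w=w^*$ with $\Delta(w^*,w^1)\le U^2$ gives $\sum_m\alpha_m\langle g^m,w^m-w^*\rangle\le U^2+\tfrac12\sum_m\alpha_m^2\|g^m\|^2$. Substituting $g^m=\nabla F(w^m)+\xi^m$, using convexity as $\langle\nabla F(w^m),w^m-w^*\rangle\ge F(w^m)-F(w^*)$ and the crude bound $\|g^m\|^2\le 2g^2+2d_m^2$, dividing by $\sum_j\alpha_j$ and applying Jensen to $F$ with the weights $\gamma_m=\alpha_m/\sum_j\alpha_j$, I would reach
\begin{multline*}
F(w^{(n)})-F(w^*)\le\frac{U^2}{\sum_m\alpha_m}+g^2\sum_m\alpha_m\gamma_m\\
+\underbrace{\sum_m\alpha_m\gamma_m d_m^2}_{(\mathrm I)}+\underbrace{\sum_m\gamma_m\langle\xi^m,w^*-w^m\rangle}_{(\mathrm{II})},
\end{multline*}
whose first two terms are deterministic and already match the $\theta$-free part of the claim; it then remains to control $(\mathrm I)$ and $(\mathrm{II})$ in probability.

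For $(\mathrm I)$: Jensen applied to $t\mapsto t^{\lambda\sigma^2}$ turns the moment hypothesis into $\E[e^{\lambda d_m^2}\mid\mathcal F_{m-1}]\le e^{\lambda\sigma^2}$ for $0\le\lambda\le 1/\sigma^2$. With $c_m=\alpha_m\gamma_m$, an exponential Markov bound and the tower property give $\Pr[(\mathrm I)>t]\le e^{\lambda\sigma^2\sum_m c_m-\lambda t}$ whenever $\lambda\le 1/(\sigma^2\max_m c_m)$; choosing $\lambda$ at its largest admissible value and $t=\theta\sigma^2\sum_m c_m$ makes the exponent at most $1-\theta$, since $\sum_m c_m\ge\max_m c_m$. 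Hence $\Pr[(\mathrm I)>\theta\sigma^2\sum_m\alpha_m\gamma_m]\le e^{1-\theta}$, which is exactly the $e^{1-\theta}$ failure probability together with the deviation term $\theta\sum_m\alpha_m\gamma_m\sigma^2$.

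For $(\mathrm{II})$: since $w^*-w^m$ is $\mathcal F_{m-1}$-measurable and $\E[\xi^m\mid\mathcal F_{m-1}]=0$, the partial sums of $Z_m=\gamma_m\langle\xi^m,w^*-w^m\rangle$ form a martingale. Optimality of $w^1$ yields $\Delta(w^*,w^1)\le U^2$ and $\Delta(w^m,w^1)\le U^2$, so by strong convexity $\|w^*-w^m\|\le 2\sqrt2\,U$; hence $|Z_m|\le 2\sqrt2\,U\gamma_m d_m$ is conditionally sub-exponential with scale $O(U\gamma_m\sigma)$ and $\E[Z_m^2\mid\mathcal F_{m-1}]=O(U^2\gamma_m^2\sigma^2)$, so that $V:=\sum_m\E[Z_m^2\mid\mathcal F_{m-1}]=O(U^2\sigma^2\sum_m\gamma_m^2)$. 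A Freedman/Bernstein-type inequality for martingales with sub-exponential increments then gives, in the sub-Gaussian regime, $\Pr[(\mathrm{II})>\theta\sqrt V]\le e^{-\theta^2/4}$, which produces the remaining deviation term $\theta\sqrt{U\sigma^2\sum_m\gamma_m^2}$ (with $U$ in the statement playing the role of the squared diameter) and the failure probability $e^{-\theta^2/4}$. A union bound over the two bad events finishes the proof.

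The hard part is the unboundedness of the stochastic subgradients. The martingale $(\mathrm{II})$ has only sub-exponential increments, so plain Azuma--Hoeffding does not apply and one must invoke a variance-controlled (Freedman/Bernstein) estimate---this is precisely where the moment hypothesis and the diameter bound $\|w^*-w^m\|\le 2\sqrt2\,U$ are needed. The same unboundedness keeps the $\|g^m\|^2$ contribution $(\mathrm I)$ out of the deterministic part and is the origin of the genuinely sub-exponential tail $e^{1-\theta}$. Careful bookkeeping of constants---in particular reading the exponent in the hypothesis as quadratic in $d$, and pinning the exact diameter constant---changes only the numeric factors inside $\theta(\cdot)$, not the structure of the bound.
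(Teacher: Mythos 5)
Your proposal follows essentially the same route as the paper: the same decomposition of $F(w^{(n)})-F(w^*)$ into the deterministic part $\sum_m\alpha_m\gamma_m g^2+U^2/\sum_m\alpha_m$, the term $\sum_m\alpha_m\gamma_m d_m^2$ controlled by an exponential Markov/Chernoff bound giving $e^{1-\theta}$, and the martingale $\sum_m\gamma_m\langle \xi^m,w^*-w^m\rangle$ controlled by a conditional MGF bound, finished with a union bound over the two bad events. The one point of divergence is the martingale term: the paper does not need Freedman/Bernstein, but uses the elementary lemma that $\E y=0$ and $\E e^{y^2}\le e$ imply $\E e^{\alpha y}\le e^{\alpha^2}$, applied conditionally to $y=s_m/(c\,U\sigma)$ after bounding the diameter by $2\sqrt{2}\,U$; reading the exponential-moment hypothesis as quadratic in $d$ (which the paper's own proof silently assumes, and which you correctly flag as the needed reading), the increments are conditionally sub-Gaussian rather than merely sub-exponential, so this direct Chernoff argument delivers the $e^{-\theta^2/4}$ tail for all $\theta>0$ rather than only in a restricted sub-Gaussian regime as your Bernstein-type step would.
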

\begin{proof}
To prove the bound we use Chernoff's inequality and inequality \eqref{eq:mid}. Denote $\gamma_m=\frac{\alpha_m}{\sum_{m=1}^n\alpha_m},$
$s_m=\la \E_{(x,y)}g(x,y,w^m)-g(x_m,y_m,w^m),w^m-w^*\ra.$ Note that
$\E s_m/\sigma^2=0,$ 
$\E_{(x_m,y_m)} e^{s_m^2/(\sigma^2 2\sqrt{2}U)}\leq \E_{(x_m,y_m)} e^{d(x_m,y_m,w)/\sigma^2}\leq e^1.$ 

Consider a random variable $y:$ $\E y=0,\, \E e^{y^2}\leq e.$ 
As $e^y\leq y+e^{2y^2/3},$ for $0<\alpha^2<2/3$ by Jensen's inequality
$\E e^{\alpha y}\leq \E e^{2y^2\alpha^2/3}\leq e^{2\alpha^2/3}\leq e^{\alpha^2}.$ As
$\E e^{\alpha x}\leq e^{1+\alpha^2/4},$ $\E e^{\alpha x}\leq e^{\alpha^2}$ for $\alpha>0.$ We apply the inequality to $y^2=s_m^2/(2\sqrt{2}\sigma^2 U):$
\begin{eqnarray*}
\E\ls\exp{\left(\alpha\sum_{m=1}^n\gamma_m s_m\right)}\rs\leq 
\E\ls\exp{\left(\alpha\sum_{m=1}^{n-1}\gamma_m s_m\right)}
\E_{(x_n,y_n)}\exp{\left(\alpha\gamma_n s_n\right)}\rs\leq\\
\E\ls\exp\left({\alpha\sum_{m=1}^{n-1}\gamma_m s_m} \right)
\exp{\left(8\alpha^2\gamma_n^2U^2\sigma^4\right)}\rs\leq 
e^{2\sqrt{2}\alpha^2U\sigma^2\sum_{m=1}^n \gamma_m^2}
\end{eqnarray*}
Then  $\P\{\sum_{m=1}^n \gamma_m s_m>\theta\}\leq e^{2\sqrt{2}\alpha^2U\sigma^2\sum_{m=1}^n \gamma_m^2-\alpha\theta}$
and for $\alpha = \frac{\theta}{4\sqrt{2}U\sigma^2\sum_{m=1}^n \gamma_m^2}:$
\begin{equation}\label{eq:prob1}
\P\lf\sum_{m=1}^n \gamma_m s_m>\theta\sqrt{U\sigma^2\sum_{m=1}^n\gamma_m^2}\rf\leq e^{-\frac{\theta^2}{8\sqrt{2}}}
\end{equation}
Let us now derive an upper bound on the $\E\ls \exp{\left(\sum_{m=1}^n{\alpha_m\gamma_m}d(x_m,y_m,w^m)^2\right)}\rs:$
by the convexity of $\E e^x$ in  $x:$
\[\E\ls{\exp\frac{\sum_{m=1}^n \alpha_m\gamma_m d^2(x_m,y_m,w^m)}{\sum_{m=1}^n \alpha_m\gamma_m \sigma^2}}\rs\leq e\]
By the Chernoff's inequality:
\begin{equation}\label{eq:prob2}
\P\lf \sum_{m=1}^n \alpha_m\gamma_m d^2(x_m,y_m,w^m) >\theta\left(\sum_{m=1}^n \alpha_m\gamma_m \sigma^2\right) \rf\leq
e^{1-\theta}
\end{equation}
Use \eqref{eq:mid} to bound the $\E_{(x,y)}\ls\ell(x,y,w^{(n)})-\ell(x,y,w^*)\rs:$
\begin{eqnarray*}
\E_{(x,y)}\ls\ell(x,y,w^{(n)})-\ell(x,y,w^*)\rs &\leq &\sum_{m=1}^n\gamma_m \la \E_{(x,y)} g(x,y,w^m),w^m-w^*\ra  \\
&\leq &\sum_{m=1}^n \gamma_m s_m + \sum_{m=1}^n\gamma_m \la g(x_m,y_m,w^m),w^m-w^*\ra\\
&\leq &
\sum_{m=1}^n \gamma_m s_m+\sum_{m=1}^n{\alpha_m\gamma_m} (g^2+d(x_m,y_m,w^m)^2)+\\
&+&\frac{U^2}{\sum_{m=1}^n \alpha_m}
\end{eqnarray*}

Combining \eqref{eq:prob1} and \eqref{eq:prob2} we get:
\begin{gather*}
\P\lf \E_{(x,y)}\ls\ell(x,y,w^{(n)})-\ell(x,y,w^*)\rs >\sum_{m=1}^n{\alpha_m\gamma_m} g^2+\frac{U^2}{\sum_{m=1}^n\alpha_m} +\right.\\
\left.\theta\left(\sqrt{U\sigma^2\sum_{m=1}^n\gamma_m^2}+
\sum_{m=1}^n \alpha_m\gamma_m \sigma^2\right)\rf \leq 
e^{1-\theta}+e^{-\theta^2/4}
\end{gather*}
\end{proof}
For the constant stepsize policy $\alpha = \frac{\sqrt{2}U}{G\sqrt{n}}:$
\[\P\lf \E_{(x,y)}\ls\ell(x,y,w^{(n)})-\ell(x,y,w^*)\rs >\frac{3UG}{\sqrt{2n}}+
\theta\left(\sqrt{\frac{U\sigma^2}{n}}+
\frac{\sqrt{2}U\sigma^2}{G\sqrt{n}}\right)\rf \leq 
e^{1-\theta}+e^{-\theta^2/4}\]

{\bf Examples}
\begin{enumerate}
\item Consider 
$\cW=\{w\in \R^{d\times k}\vert \underset{i}{\max}\|w_i\|_2\leq\Omega\},$
$\cX=\{x\in \R^d\vert\|x\|_2<X\}$ and
$\psi(w)=\frac{1}{2}{\sum_{i=1}^k\|w_i\|_2^2}.$ In this case 
$\Delta(w^1, w^2)=\frac{1}{2}\sum_{i=1}^k\|w^1_i-w^2_i\|_2^2$ and 
$U^2= k\Omega^2,$  $G^2=\frac{2X^2}{\rho^2}.$
This leads to excess risk rate 
$$F(w^{(n)})-F(w^*)\leq\frac{{2}\Omega X}{\rho}\sqrt{\frac{k}{n}}$$
and for the large deviations:
\begin{eqnarray*}
\P\lf \E_{(x,y)}\ls\ell(x,y,w^{(n)})-\ell(x,y,w^*)\rs \right. & > &\frac{3X\Omega}{\rho}\sqrt{\frac{k}{n}}\\
&+&
\left.\theta\left(\sqrt{\frac{k\Omega\sigma^2}{n}}+
\frac{\Omega\sigma^2\rho}{X}\sqrt{\frac{k}{n}}\right)\rf \\
& \leq & e^{1-\theta}+e^{-\theta^2/4}
\end{eqnarray*}
\item If the margins are allowed to be different for different classes:
$\cW=\{w\in \R^{d\times k}\vert {\sum}_{i=1}^k\|w_i\|_2\leq\Omega\}$ and 
$\psi(w)$ is chosen to be strongly convex w.r.t. $\ell_1/\ell_2$ norm: $\psi(w)=\frac{e\ln k}{1+1/\ln k}\sum_{i=1}^k\|w^i\|_2^{1+1/\ln k}$ 
\cite{juditsky2011first}, which is the case of favorable geometry, then
$U^2=e\ln k\Omega,$ $G=X/\rho$ and the rate can be pushed to
$$F(w^{(n)})-F(w^*)\leq\frac{X}{\rho}\frac{\sqrt{2e\Omega \ln{(k)}}}{\sqrt{n}}$$
and
\begin{eqnarray*}
\P\lf \E_{(x,y)}\ls\ell(x,y,w^{(n)})-\ell(x,y,w^*)\rs \right. &>& \frac{3X}{\rho}
\sqrt{\frac{e\ln k\Omega}{2n}}\\
&+&\left.\theta\left(\sqrt{\frac{\sqrt{e\ln k\Omega}\sigma^2}{n}}+
\frac{\sqrt{2e\ln k\Omega}\sigma^2\rho}{X\sqrt{n}}\right)\rf\\
& \leq &
e^{1-\theta}+e^{-\theta^2/4}
\end{eqnarray*}
\end{enumerate}

\section{Class probability}
We further focus on the Euclidean setup 
$\cW=\{w\in \R^{d\times k}\vert \underset{i}{\max}\|w_i\|_*\leq\Omega\},$
$\cX=\{x\in \R^d\,\vert\,\|x\|<X\}.$ As the classes can be unequally probable, we turn to $\cF_c=\lf f(x,y,w)=c_y\la x,w_y\ra\,\vert\,w\in\cW\rf$ with a fixed 
$c\in R^k:\, \|c\|_{\infty}=1.$ We also choose the norm on $\cW$ to be 
$\|w\|_b=\sqrt{\sum_{i=1}^k b_y\|w_y\|^2}$ with $b\in\R^k,\, b_i\geq 0.$ For a strongly convex w.r.t. $\|\cdot\|_*$ d.-g. function $\psi(w_i),$ we set $\hat{\psi}(w)=\frac{1}{2}\sum_{i=1}^kb_i\psi(w_i),$ which is a d.-g. f. for $\|\cdot\|_b$ and the corresponding Bregman divergence is 
$\Delta_b(w^1,w^2)=\frac{1}{2}\sum_{i=1}^n b_i\Delta(w^1_i,w^2_i),$ where 
$\Delta(w_i^1,w_i^2)$ is Bregman divergence of $\psi(w_i).$

%

Let $p(y)=\P\{y_i=y\},$ the upper bound for each step is then:
\begin{eqnarray*}
\E\Delta(w, w^{m+1}) & \leq &\alpha_m \big( F(w)-F(w^m)\big) +
\E\Delta(w, w^m)+ \alpha_m\E \la g^m, w^m- w^{m+1}\ra\\
&-&\E\Delta(w^{m+1}, w^m)=\alpha_m \big( F(w)-F(w^m)\big) +
\E\Delta(w, w^m)\\
&+&\frac{\alpha_m}{\rho}\sum_{y=1}^k p(y)\left(
\E\ls c_y\lv\la x_m, w_y^m-w_y^{m+1}\ra \rv\;\,\bigg\vert y_m=y\rs\right. \\
&+&\left. \sum_{y'=1}^k
\E\ls c_{y'} \lv\la x_m, w_{y'}^m-w_{y'}^{m+1}\ra\rv
[ y'=\underset{y'\neq y_m,y'\in\cY}{\argmax}\la x,w_{y'}\ra] \,\,\bigg\vert y_m=y\rs\right)\\
&-&\sum_{y=1}^k\frac{1}{2}\E\ls\sum_{i=1}^k b_i
\|w_i^{m+1}- w_i^m\|^2\,
\bigg\vert\,  y_m=y\rs p(y)\\
&\leq &  \alpha_m \big( F(w)-F(w^m)\big) +\E\Delta(w, w^m)+ 
\frac{\alpha_m^2 X^2}{2\rho^2} \sum_{y=1}^k
p(y)\left(\frac{c_y^2}{b_y}+\underset{y'\in\cY}{\max}{\frac{c_{y'}^2}{b_{y'}}}
 \right)
\end{eqnarray*}
To bound the last term notice that 
$y'$ is a determined function of $(x, y)$ and $w:$ 
$$p'(y, y')=\E_{(x,w|y)}p(y)
\ls y'=\underset{y''\in\cY\setminus\{y\}}{\argmax\,}
(\alpha(y'')\la x,w_{y''} \ra)\rs,$$
where $\E_{(x,w|y)}$ denotes the conditional expectation of $x$ and 
$w$ with respect to $y$

$$A=\sum_{y, y'=1}^k\left(\frac{p'(y,y')}{\gamma(y)}+
\frac{\alpha(y')^2 p'(y,y')}{\alpha(y)^2\gamma(y')}\right) =
\sum_{y=1}^k\frac{p(y)}{\gamma(y)}+$$
$$\sum_{y=1}^k\left(\frac{p(y)}{\alpha(y)^2}\E_{(x,w|y)}\sum_{y'=1}^k 
\ls y'=\underset{y''\in\cY\setminus\{y\}}{\argmax\,}
(\alpha(y'')\la x,w_{y''} \ra)\rs
\frac{\alpha(y')^2}{\gamma(y')} \right)
\leq \sum_{y=1}^k\left(\frac{p(y)}{\gamma(y)}+\frac{p(y)}
{\alpha(y)^2}\max_{y'\in \cY}\frac{\alpha(y')^2}{\gamma(y')} \right)$$ 

Setting $\alpha(y)^2=\gamma(y)=\sqrt{p(y)},$ 
$B=\sum_{y=1}^k \sqrt{p(y)}$ and summing over
$m=1, n$ we get:
$$F(w^{(n)})-F(w^*)\leq \frac{B\Omega^2+
\frac{2X^2B}{\rho^2}\sum_{m=1}^n\alpha_m^2}
{2\sum_{m=1}^n\alpha_m}$$

If $\alpha_m=\frac{\Omega\rho}{X\sqrt{2n}}:$
$$F(w^{(n)})-F(w^*)\leq \frac{\Omega X\sqrt{2}}{\rho\sqrt{n}}\sum_{y=1}^k \sqrt{p(y)}
$$

$\sum_{i=1}^k\sqrt{p(i)}\leq \sqrt{k},$ so the result is not worse than the previous 
one, but in case of highly unbalanced classes, e. g. power log law
$p(i)=Ci^{-\beta}, 
i=\overline{1, k}, \beta>2$
$$\sum_{i=1}^k \sqrt{p(i)}=\frac{\sum_{i=1}^ki^{-\beta/2}}{\sqrt{\sum_{i=1}^k i^{-\beta}}}=O(1)$$
%
%
As long as the exact prior probabilities are usually not known exactly,
we will assume that they are known up to some constant and the
estimate is independent of the sample $(x_i, y_i)_{i=1}^n$: 
$p(y)\leq(1+\eps)\hat{p}(y)$

Again, setting $\alpha(y)^2=\gamma(y)=\sqrt{\hat{p}(y)}$ we get the estimate
$$A\leq 2\sum_{y=1}^k \frac{p(y)}{\gamma(y)}\leq
2(1+\eps)\sum_{y=1}^k \sqrt{\hat{p}(y)}$$
$$F(w^{(n)})-F(w^*)\leq \frac{\Omega X\sqrt{2(1+\eps)}}{\rho\sqrt{n}}\sum_{y=1}^k \sqrt{\hat{p}(y)}
$$

The work is supported in part by the grant of the President of Russia for young PhD
 MK-9662.2016.9 and by the RFBR grant 15-07-09121a.
\bibliographystyle{chicago}
\bibliography{MD.bib}


%

\end{document}